  \newcommand{\N}{\mathbb{N}}
   \newcommand{\R}{\mathbb{R}}
   \newcommand{\C}{\mathbb{C}}
  \newcommand{\Z}{\mathbb{Z}}
  \newcommand{\cU}{\mathcal{U}}
\newtheorem{theorem}{{\bf Theorem}}
\newtheorem{definition}{ {\bf Definition}}
\newtheorem{remark}{Remark}
\newtheorem*{acknowledgements}{Acknowledgements}
\begin{document}
\author{David Mart\'inez Torres}
\address{Departamento de Matemtica, PUC-Rio, R. Mq. S. Vicente 225, Rio de Janeiro 22451-900, Brazil}

\email{dfmtorres@gmail.com}
%
\title{A Poisson manifold of strong compact type}
\begin{abstract}
 We construct a corank one Poisson manifold which is of strong compact type, i.e., the associated Lie algebroid structure on its cotangent bundle is integrable,
 annd the source 1-conected (symplectic) integration is compact. The construction relies on the moduli of marked K3 surfaces. 
\end{abstract}

 \maketitle

\section{Introduction}

Poisson structures make precise the notion of (possibly singular) foliations by symplectic leaves.
A  Poisson structure on a manifold $M$ is given by a bi-vector $\pi$ whose Schouten bracket with itself vanishes,
or more classically, by a bracket  
$\{\cdot, \cdot\}$ on smooth functions such that $(C^\infty(M),\{\cdot,\cdot\})$ becomes  a Lie algebra over the reals 
and $\{f,\cdot\}$ is a derivation for every function $f$. 

 Poisson manifolds may have very complicated characteristic foliation and transverse geometry,
and this results in the lack of global 
or semilocal structure theorems in Poisson geometry.

 A natural way to exert control on a Poisson structure is to require the existence of a well-behaved `Lie group-like'
object  integrating in a suitable sense the infinite dimensional Lie algebra $(C^\infty(M),\{\cdot, \cdot\})$.
To pursue this point of view, it is more convenient to regard at the Lie algebroid structure induced on $T^*M$ by the Poisson structure:
specifically,
the Lie algebroid bracket on 1-forms is determined by the Lie algebra bracket
on functions $[df,dg]:=d\{f,g\}$; the anchor map $\rho$ sends $df$ to its Hamiltonian vector field $X_f:=\{f,\cdot\}$.

Recall that a Lie algebroid $(A\rightarrow M,[\cdot,\cdot],\rho)$ is said to be integrable if there exists a Lie groupoid 
 whose Lie algebroid is isomorphic to $(A\rightarrow M,[\cdot,\cdot],\rho)$; such a Lie groupoid
is called an integration of $(A\rightarrow M,[\cdot,\cdot],\rho)$.  Also, if a Lie algebroid
is integrable, up to isomorphism it has a unique  integration with 1-connected source fibers, its so-called `canonical integration'. 

In analogy with Lie theory and group actions, it is natural to focus one's attention
 on Poisson manifolds whose associated Lie algebroid is integrable,
and the canonical integration is compact.

\begin{definition}\cite{CFM1}
 We say that $(M,\pi)$ is a Poisson manifold of strong compact type (henceforth PMSCT)
 if its associated Lie algebroid structure $(T^*M,[\cdot,\cdot]_\pi,\rho_{\pi})$
 is integrable, and its canonical integration is Hausdorff and compact.
\end{definition}

If the Lie algebroid associated to a Poisson structure is integrable,
then the canonical integration $\Sigma(M)\rightrightarrows M$
carries a canonical
symplectic structure $\Omega_\Sigma$ compatible with the groupoid operations (a multiplicative symplectic structure) \cite{CF,CaF}.
Thus, one can think of a PMSCT as being `desingularized' by a compact symplectic manifold.

By their very definition, PMSCT should have common features with both compact semisimple Lie groups and compact symplectic manifolds, 
and, in fact, it is possible to lay down  a rich general theory for them confirming the above expectation \cite{CFM1}. However, 
 also because the definition of PMSCT is rather demanding,  the construction of PMSCT beyond the trivial
 case of compact symplectic manifolds with finite fundamental group,
 is  a rather challenging problem.  In this respect, in  \cite{CFM1,CFM2} we describe  a connection between 
 PMSCT and quasi-Hamiltonian $\mathbb{T}^n$-spaces.  Quasi-Hamiltonian $\mathbb{T}^n$-spaces appeared for the first time in \cite{MD},
  related to a possible characterization of Hamiltonian circle actions as those symplectic circle actions with fixed points
  (however, the name 
 quasi-Hamiltonian space was coined in \cite{AMM}, where these spaces were analyzed under the perspective of symplectic reduction).
 In \cite{Ko},
  D. Kotschick gave
  an example of a quasi-Hamiltonian $S^1$-space with contractible orbits and free $S^1$-action, answering 
  a question raised in \cite{MS}.
 According to \cite{CFM1}, the Poisson reduced space of such quasi-Hamiltonian $S^1$-space --whose characteristic foliation
 is a fibration over the circle with 
 fiber diffeomorphic to a K3 surface-- must be a PMSCT.

 Kotschick's construction relies on properties  of the moduli space of marked hyperKahler K3 surfaces. 
 In this note we revisit the moduli theory of marked K3 surfaces from a Poisson theoretic perspective, namely, making
 emphasis on the (Poisson) universal family. Then we show how appropriate equivariant mappings into
 the moduli of marked pairs give rise to PMSCT. Finally, we construct such an equivariant mapping 
 building on Kotschick's work (the most delicate part being the existence of `holes' in the moduli of hyperKahler metrics associated
 to roots, an issue which was overlooked in \cite{Ko}).
 
  \begin{acknowledgements}
This note reports on joint work with  M. Crainic and R. L. Fernandes. I would like to thank the referee for his suggestions.
\end{acknowledgements}

\section{Construction of the PMSCT}

The PMSCT is going to be a fibration over the circle with fiber diffeomorphic to the smooth manifold underlying a K3 surface;
the symplectic leaves will be fibers.

\subsection{Families of marked K3 surfaces}

In what follows we will recall the aspects of the moduli theory of marked K3 surfaces, making emphasis
on the universal families. Our reference is  \cite{BPV}, Chapter VIII.

\subsubsection{The objects} A K3 surface is a closed complex surface with trivial canonical bundle and trivial fundamental
group.  K3 surfaces are all diffeomorphic to a fixed 4-dimensional simply connected manifold $F$.
The intersection form in $H^2(F;\Z)$ is isomorphic to the rank 22  lattice \[L:=3H\oplus 2(-E_8),\]
where $H$ is the intersection form of $S^2\times S^2$ and $E_8$ is the intersection form with matrix the absolute values of the
entries of the 
Cartan matrix
of $E_8$ (see \cite{Ki} for background on intersection forms).

A marked K3 surface is defined to be  a K3 surface $S$ together with an isomorphism of lattices $H^2(S;\Z)\rightarrow L$.

\subsubsection{The spaces} 
Let $L_\C$ denote the complexification of the unimodular lattice $L$.

The period domain is defined to be
\[\Omega=\{[v]\in  \mathbb{P}(L_\C)\,|\, \langle v,v \rangle=0,\, \langle v,\bar{v} \rangle >0\}.\]

For marked K3 surfaces there exists a (non-Hausdorff) moduli space $M_1$ together with  a so called  period map
\[\tau\colon M_1\rightarrow \Omega.\] This is an \'etale holomorphic surjection, sending each 
marked K3 in $M_1$ to the line spanned in $L_\C$ by any of its holomorphic symplectic forms (via the marking).

The moduli space relevant for our purposes is the moduli space of marked pairs or marked K3 surfaces together with a Kahler
class/Kahler form, denoted by $K\Omega^0$:

Let $\Delta=\{\delta\in L\,|\,\delta^2=-2\}$ be the so called  set of roots and let 
$L_\R$ be the tensorization of
the unimodular lattice $L$
with the reals. Then
\[K\Omega^0=\{(z,v)\in L_\R\times \Omega\,|\,  \langle z,z \rangle >0,\,z\in v^{\perp},  \langle z,\delta \rangle \neq 0,\,
\forall \delta\in v^{\perp}\cap \Delta\}.\]
There is a forgetful map $K\Omega^0\rightarrow M_1$ which for a fixed complex structure
collapses the Kahler cone to a point; there is also a 
 first projection $\mathrm{pr}_1\colon K\Omega^0\rightarrow L_\R$.

\subsubsection{The families} The two aforementioned moduli spaces have a corresponding universal family.

Firstly, there exist  $\cU\rightarrow M_1$ a universal marked family.

The second universal family is perhaps less standard. We say that $(\mathcal{E}, \pi)\rightarrow W$ is a Poisson structure
of hyperKahler type or a hyperKahler family, if $\mathcal{E}$ is a real analytic family of K3 surfaces and $\pi$ is a (smooth) 
Poisson structure with characteristic foliation  the fibration $\mathcal{E}\rightarrow W$, and 
such that $\pi(w)$ is a Kahler form
for a hyperKahler metric for $E_w$ (here we understand $\pi$ as a fiberwise symplectic form).

The pullback of $\cU$ by the real analytic submersion $K\Omega^0\rightarrow M_1$
gives rise to a real analytic family $K\cU\rightarrow K\Omega^0$. This family is endowed with a
Poisson structure $\pi_{K\cU}$ such that $\pi_{K\cU}(\kappa,v)$ is the Kahler form representing the class $\kappa$ associated to the
unique hyperKahler metric  
whose space of self-dual classes is the positive 3-plane in $L_\R$ spanned by  $\kappa,v$. As a result
$(K\cU, \pi_{K\cU})$ is a universal
marked  Poisson space of hyperKahler type. 

Remark that the integrability criteria in \cite{CF} and \cite{AH} imply that $(K\cU, \pi_{K\cU})$ 
is integrable and that the canonical integration
is Hausdorff.

 \subsubsection{The action} The lattice $L$ has an index 2 subgroup of automorphisms $O^+$: these are the elements
 which act on $L_\R$  preserving the orientation of positive 3-planes \cite{Bo}.
 
 The are obvious actions of $O^+$ on $L_{\R}$ and $\Omega$. The strong form of Torelli's
 theorem says that  $O^+$ also acts on $M_1$  and that the obvious action 
 on $L_\R\times \Omega$ preserves $K\Omega^0$.

The induced action on universal families $\cU$ and $K\cU$ is a consequence on the one hand of the strong form of Torelli's theorem, 
which for each $\gamma\in O^+$ grants the existence of a unique 
biholomorphisms between any two fibers in the orbit of $\gamma$ in the base of the corresponding 
universal family, and the absence of holomorphic vector fields on any K3, which implies that 
the fiberwise biholomorphism  fits into an automorphism (and the latter property also holds for real analytic families \cite{Me}, 
so there is an
induced action on $K\cU$).
It also follows easily that the Poisson tensor $\pi_{K\cU}$ is preserved.

All the maps described between spaces and families are $O^+$-equivariant.

\subsubsection{Universal family of hyperKahler type and PMSCT}

The universal family $(K\cU,\pi_{K\cU})$ provides a way of constructing PMSCT. Before stating our result, recall that an affine subspace $V$
of $H^2(F;\R)$ is called integral affine, if its vector space of directions $\overrightarrow{V}$ intersects the integral
cohomology lattice $H^2(F;\Z)$ in a lattice whose rank is the dimension of $V$ (a lattice of full rank). 

\begin{theorem}\label{thm:const}
Let $V^0$ be a subset of $K\Omega^0$ and $\Gamma$ a subgroup of $O^+$ with the following properties:
\begin{enumerate}
 \item $V^0$ is $\Gamma$-invariant;
 \item  $V:=\mathrm{pr}_1(V^0)$ is an integral affine subspace with a section $f\colon V\rightarrow V^0$;
 \item The action of $\Gamma$ on $V_0$
 is free, proper and co-compact;
\end{enumerate}
Then 
 \[(M,\pi):=f^*(K\cU,\pi_{K\cU})/\Gamma \]
 is a PMSCT.
 \end{theorem}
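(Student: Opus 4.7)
The construction of $(M,\pi)$ proceeds in two stages: first pull back $(K\cU,\pi_{K\cU})$ along $f$ to obtain a Poisson manifold over the integral affine slice $V$, and then quotient by the (lifted) $\Gamma$-action. At each stage one tracks the Poisson structure, its canonical integration, and the $\Gamma$-action, verifying that the three properties defining a PMSCT (integrability, Hausdorffness, and compactness of the canonical integration) are preserved. The key inputs are that $(K\cU,\pi_{K\cU})$ is already integrable with Hausdorff canonical integration $\Sigma(K\cU)$, and that all of the relevant structure is $O^+$-equivariant.

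\textbf{Pullback step.} Since the characteristic foliation of $\pi_{K\cU}$ is precisely the submersion $K\cU\to K\Omega^0$, the bivector is tangent to the fibers and pulls back to a fiberwise symplectic Poisson bivector $f^*\pi_{K\cU}$ on the family $f^*K\cU\to V$. For such Poisson fibrations the canonical integration is constructed fiberwise (as the fundamental groupoid of each symplectic leaf) and therefore commutes with base change, giving $\Sigma(f^*K\cU)=f^*\Sigma(K\cU)$; this is Hausdorff because $\Sigma(K\cU)$ is. Moreover, each source fiber of $\Sigma(f^*K\cU)$ is the pair groupoid of a K3 surface, hence compact and 1-connected.

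\textbf{Quotient step.} The $O^+$-equivariance provides $\Gamma$-actions on $K\cU$ and on $\Sigma(K\cU)$ lifting that on $K\Omega^0$; using the section $f\colon V\to V^0\subset K\Omega^0$ one obtains compatible $\Gamma$-actions on $f^*K\cU$ and $f^*\Sigma(K\cU)$ by Poisson, respectively groupoid, automorphisms. Since $\Gamma$ acts freely, properly and cocompactly on $V^0$ by hypothesis, and since the fibers of both $f^*K\cU\to V$ and $f^*\Sigma(K\cU)\to V$ are compact, the lifted actions on the total spaces are themselves free, proper and cocompact. Hence $M:=f^*K\cU/\Gamma$ is a compact Hausdorff Poisson manifold and $\Sigma(M):=f^*\Sigma(K\cU)/\Gamma$ is a compact Hausdorff Lie groupoid over $M$ integrating $(T^*M,[\cdot,\cdot]_\pi,\rho_\pi)$.

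\textbf{Identification as canonical integration, and main obstacle.} It remains to check that $\Sigma(M)$ is the \emph{canonical} integration, i.e., has 1-connected source fibers. Because the $\Gamma$-action on $V^0$ is free, distinct symplectic leaves of $f^*K\cU$ descend to distinct leaves of $M$, so the source fibers of $\Sigma(M)$ coincide with those of $f^*\Sigma(K\cU)$, namely 1-connected K3 surfaces. Combined with the Hausdorffness and compactness established above, this finishes the argument. The step I expect to be the most delicate is the identity $\Sigma(f^*K\cU)=f^*\Sigma(K\cU)$ and, more generally, the transfer of the integrability criteria of \cite{CF,AH} through the restriction along the (not necessarily open) embedding $f$; the integral affine and section hypotheses are what make this transfer go through. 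Once this is granted, the remainder is a bookkeeping verification that each piece of structure descends compatibly through the free proper cocompact $\Gamma$-action.
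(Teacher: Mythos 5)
The quotient step of your argument is fine, but the heart of the proof rests on a false description of the canonical integration. For a regular Poisson manifold whose leaves are the fibers of a fibration, $\Sigma$ is \emph{not} the fiberwise pair/fundamental groupoid: its source fiber at $x$ is a principal bundle over (the cover of) the leaf with structure group the isotropy group $\nu_x(F_b)^*/N_x$, where $N_x$ is the monodromy lattice of \cite{CF}. These transverse isotropy directions are exactly what the definition of PMSCT is sensitive to, and they are invisible in your picture. In particular the identity $\Sigma(f^*K\cU)=f^*\Sigma(K\cU)$, which you flag as delicate but then assume, is false: restricting to the slice $f(V)$ changes the normal bundle of the leaves and hence the monodromy lattices, so integrability of $f^*(K\cU,\pi_{K\cU})$ does \emph{not} follow from integrability of $(K\cU,\pi_{K\cU})$ --- for a line $V$ in an irrational direction the restricted structure has dense monodromy and the conclusion fails outright. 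Relatedly, the source fibers of $\Sigma(K\cU)$ are not the K3 fibers and are not compact (its monodromy lattices have rank $22$ inside a much larger conormal space), so even the starting point of your base-change argument is not a compact groupoid.

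What the proof actually requires, and what your proposal omits, is the computation of the monodromy lattices of $(M,\pi)$ itself. Since the fibers are simply connected and the characteristic-class map of the restricted family is essentially the inclusion of $V$ into $H^2(F;\R)$, one identifies $N_x^*$ with $H^2(F;\Z)\cap\overrightarrow{V}$; the integral affine hypothesis says precisely that this is a lattice of full rank, whence $N_x$ is a full-rank discrete lattice in $\nu_x(F_b)^*$. This yields (i) integrability, via the discreteness criterion of \cite{CF} together with the fact that full-rank discrete monodromy lattices vary smoothly, so that $\coprod_{x}N_x$ is closed, hence uniformly discrete, in the conormal bundle; (ii) compactness of $\Sigma(M)$, because the isotropy groups $\nu_x(F_b)^*/N_x$ are then tori and the source fibers are torus bundles over compact leaves over the compact $M$; and (iii) Hausdorffness of $\Sigma(M)$, by \cite{AH}, again from the closedness of $\coprod_x N_x$. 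None of these three points is established by your argument, and the integral affine hypothesis, which you invoke only nominally, is where all the work lies.
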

\begin{proof}
Because $V^0$ is $\Gamma$-invariant the hyperKahler family $f^*(K\cU,\pi_{K\cU})$ is also acted upon by $\Gamma$.
Because the action of $\Gamma$ on $V_0$
 is free, proper and co-compact, the action of $\Gamma$ on $f^*(K\cU,\pi_{K\cU})$ has the same properties. Therefore
 \[(M,\pi):=f^*(K\cU,\pi_{K\cU})/\Gamma\]
 is a well-defined compact Poisson manifold.
 
 The integrability  of a Poisson structure is controlled by the behavior of the so called monodromy lattices $N_{x},x\in M$  \cite{CF}.
  In case the Poisson manifold is a fibration
  with simply connected fiber $F$  and leaf space $B$, near a given fiber $F_{b_0}$ the Poisson structure induces a map
  \[\mathrm{ch}:=U_{b_0}\subset B\longrightarrow H^2(F;\R),\, b\longmapsto [\pi(b)],\]
  and the monodromy lattice 
   at a point $x$ in a fiber $F_b$  is the lattice dual to \[D\mathrm{ch}_b^{-1}(H^2(F;\Z))\subset \nu_x(F_b).\]
   
  Back to our Poisson manifold  $(M,\pi)$, it is identified with $V/\Gamma$. Therefore, for $x\in M$ in the fiber corresponding to $b\in V/\Gamma$,
  the dual of the monodromy lattice $N_x^*$ can be identified with $H^2(F;\Z)\cap \overrightarrow{V}$. Because by assumption 
  $V$ is integral affine, we conclude that $N_x\subset \nu_x(F_b)^*$ is a discrete lattice of full rank.

 It can be proven \cite{CFM2}  that discrete  monodromy lattices of full rank must vary smoothly. Therefore
  $\coprod_{x\in M} N_{x}$ is a closed subset of  the conormal bundle to the fibration, and in particular it is
  uniformly discrete. Hence, according 
  to \cite{CF} $(M,\pi)$ is integrable.
  
  The symplectic fibers of $(M,\pi)$ are simply connected, 
  and therefore the isotropy group of the canonical integration $\Sigma(M)$ at $x$ can be identified with $\nu_x(F_b)^*/N_x$. Because
  $N_x$ has full rank this isotropy group is a torus, and in particular it is compact. As a consequence
  the source fibers of $\Sigma(M)\rightrightarrows M$ are principal bundles with compact fiber over a compact base, and thus
  compact. Since $M$ is compact, we conclude that $\Sigma(M)$ is compact. 
  
 Lastly,
 the delicate issue of the separability of $\Sigma(M)$ follows from the closedness of $\bigsqcup_{x \in M}N_x$ in the conormal bundle
 \cite{AH}.
\end{proof}

 \subsection{Construction of the PMSCT}
 
 We are going to construct a PMSCT with leaf space $\R/\Z$ by applying theorem \ref{thm:const}.  Our construction
 builds on an original idea by Kotschick \cite{Ko}.
 
 The subgroup $\Gamma\cong \Z$ of  $O^+$ is going to be defined by giving a generator $\phi$:
 let $\{u,v\}$,$\{x,y\}$ and $\{z,t\}$ be standard basis of each of the three copies of the hyperbolic intersection form $H$.
Consider the automorphism 
\[\phi\colon 2H\rightarrow 2H,\,\,u\mapsto u,\, v\mapsto v+y,\,x\mapsto x-u,\,y\mapsto y,\]
and extend it to $L$ by the identity on $H\oplus 2(-E_8)$. It is not hard to check that $\phi\in O^+$.

To define the subset $V^0$ we consider the map
\begin{eqnarray*}
f\colon \R &\longrightarrow & L_\R\oplus \mathbb{P}(L_\C)\\
s &\longmapsto & (2u+v+sy,[x-su+2y+e,z+2t+f]),
\end{eqnarray*}
where $e=(e',0),f=(0,e')\in 2(-E_8)$. We assume that the components of $e'$ and 1 are
linearly independent over the rationals, and that $e$ is small enough. We define 
\[V^0:=f(\R).\]

Note that 
\begin{enumerate}
 \item  $V^0$ is $\phi$-invariant and therefore $\Gamma$-invariant;
\item  the projection $\mathrm{pr}_1\colon V^0\rightarrow V$ is a diffeomorphism onto an integral affine line
and therefore it has a section;
\item the action
of $\Gamma$ on $V$  is obviously free, proper and co-compact.
\end{enumerate}
Therefore, to conclude that we are in the  hypotheses of theorem \ref{thm:const}
we just need to make sure that 
\[
 V^0\subset K\Omega^0,
\]
which amounts to solving an arithmetic problem.

The three vectors $2u+v+sy,x-su+2y+e,z+2t+f\in L_\R$ are orthogonal and have positive square provided $e'$ is small. The
difficulty is checking that the positive 3-plane they span is not orthogonal to a root for all $s\in \R$.

We argue by contradiction: we assume that the exist such a root $\delta=\delta_1+\delta_{21}+\delta_{22}$, where
$\delta\in 3H$ and $\delta_{2i}$ belongs to the i-th copy of $-E_8$. 

Because $\delta$ is orthogonal to $2u+v+sy$ we have
\[\delta_1=Ax+Bz-(2D+sA)u+Dv+Ey+Ft,\,A,B,2D+sA,D,E,F\in \Z.\]
Because $\delta$ is orthogonal to $x-su+2y+e,z+2t+f$ we have
\begin{equation}\label{eq:root} F+2B+\langle \delta_{22},e'\rangle-Ds=0,\,E+2A+\langle\delta_{21},e'\rangle=0.
 \end{equation}
The second equation in (\ref{eq:root}) implies $\delta_{21}=0$, because otherwise the irrationality assumption on the
components of $e'$ and 1 would not hold.

Since $-E_8$ is an even, negative definite lattice, we may write $\langle\delta_{22},\delta_{22}\rangle=-2n$, $n\in \N$.
The root condition for $\delta$ becomes 
\begin{equation}\label{eq:root2}2D^2+2A^2+2B^2-\langle B\delta_{22},e'\rangle=1-n.
 \end{equation}
Once more the irrationality assumption on the components of $e'$ and 1 implies that (\ref{eq:root2}) can only have solutions
if $\delta_{22}=0$, and any such solution must also solve 
\[2D^2+2A^2+2B^2=1,\]
which is a contradiction since $A,B,D$ must be integers.

Therefore we conclude that such  root $\delta$ cannot exist and thus $V^0\subset K\Omega^0$,
finishing the construction of the non-trivial PMSCT.

\begin{remark} The previous construction can be arranged to produce PMSCT with 2-dimensional base \cite{CFM3},
  the arithmetic problem associated to the roots being more involved.
\end{remark}

 \end{document}